\def\@settitle{%
  \vspace*{-2em}
  \begin{flushleft}%
    \LARGE\bfseries
    \strut\@title\strut
  \end{flushleft}%
}
\def\@setauthors{%
  \begingroup
  \def\thanks{\protect\thanks@warning}%
  \trivlist
  \raggedright
  \large \@topsep27\p@\relax
  \advance\@topsep by -\baselineskip
  \item\relax
  \author@andify\authors
  \def\\{\protect\linebreak}%
  \authors
  \ifx\@empty\contribs
  \else
    ,\penalty-3 \space \@setcontribs
    \@closetoccontribs
  \fi
  \normalfont
  \endtrivlist
  \endgroup
}
\def\@setaddresses{\par
  \nobreak \begingroup
  \small\raggedright
  \def\author##1{\nobreak\addvspace\smallskipamount}%
  \def\\{\unskip, \ignorespaces}%
  \interlinepenalty\@M
  \def\address##1##2{\begingroup
    \par\addvspace\bigskipamount\noindent
    \@ifnotempty{##1}{(\ignorespaces##1\unskip) }%
    {\ignorespaces##2}\par\endgroup}%
  \def\curraddr##1##2{\begingroup
    \@ifnotempty{##2}{\nobreak\noindent\curraddrname
      \@ifnotempty{##1}{, \ignorespaces##1\unskip}\/:\space
      ##2\par}\endgroup}%
  \def\email##1##2{\begingroup
    \@ifnotempty{##2}{\nobreak\noindent E-mail address%
      \@ifnotempty{##1}{, \ignorespaces##1\unskip}\/:\space
      \ttfamily##2\par}\endgroup}%
  \def\urladdr##1##2{\begingroup
    \def~{\char`\~}%
    \@ifnotempty{##2}{\nobreak\noindent\urladdrname
      \@ifnotempty{##1}{, \ignorespaces##1\unskip}\/:\space
      \ttfamily##2\par}\endgroup}%
  \addresses
  \endgroup
  \global\let\addresses=\@empty
}
\def\@setabstracta{%
    \ifvoid\abstractbox
  \else
    \skip@17pt \advance\skip@-\lastskip
    \advance\skip@-\baselineskip \vskip\skip@
    \box\abstractbox
    \prevdepth\z@ 
    \vskip-28pt
  \fi
}
\renewenvironment{abstract}{%
  \ifx\maketitle\relax
    \ClassWarning{\@classname}{Abstract should precede
      \protect\maketitle\space in AMS document classes; reported}%
  \fi
  \global\setbox\abstractbox=\vtop \bgroup
    \normalfont\small
    \list{}{\labelwidth\z@
      \leftmargin0pc \rightmargin\leftmargin
      \listparindent\normalparindent \itemindent\z@
      \parsep\z@ \@plus\p@
      
    }%
    \item[\hskip\labelsep\bfseries\abstractname.]%
}{%
  \endlist\egroup
  \ifx\@setabstract\relax \@setabstracta \fi
}
\def\ps@headings{\ps@empty
  \def\@evenhead{%
    \setTrue{runhead}%
    \normalfont\scriptsize
    \rlap{\thepage}\hfill
    \def\thanks{\protect\thanks@warning}%
    \leftmark{}{}}%
  \def\@oddhead{%
    \setTrue{runhead}%
    \normalfont\scriptsize
    \def\thanks{\protect\thanks@warning}%
    \rightmark{}{}\hfill \llap{\thepage}}%
  \let\@mkboth\markboth
}\ps@headings
\def\section{\@startsection{section}{1}%
  \z@{-1.4\linespacing\@plus-.5\linespacing}{.8\linespacing}%
  {\normalfont\bfseries\Large}}
\def\subsection{\@startsection{subsection}{2}%
  \z@{-.8\linespacing\@plus-.3\linespacing}{.5\linespacing\@plus.2\linespacing}%
  {\normalfont\bfseries\large}}
\def\subsubsection{\@startsection{subsubsection}{3}%
  \z@{.7\linespacing\@plus.2\linespacing}{-1.5ex}%
  {\normalfont\bfseries}}
\def\paragraph{\@startsection{paragraph}{4}%
  \z@{.7\linespacing\@plus.2\linespacing}{-1.5ex}%
  {\normalfont\itshape}}
\def\@secnumfont{\bfseries}
\renewcommand\contentsnamefont{\bfseries}
\def\@starttoc#1#2{\begingroup
  \setTrue{#1}%
  \par\removelastskip\vskip\z@skip
  \@startsection{}\@M\z@{\linespacing\@plus\linespacing}%
    {.5\linespacing}{
      \contentsnamefont}{#2}%
  \ifx\contentsname#2%
  \else \addcontentsline{toc}{section}{#2}\fi
  \makeatletter
  \@input{\jobname.#1}%
  \if@filesw
    \@xp\newwrite\csname tf@#1\endcsname
    \immediate\@xp\openout\csname tf@#1\endcsname \jobname.#1\relax
  \fi
  \global\@nobreakfalse \endgroup
  \addvspace{32\p@\@plus14\p@}%
  \let\tableofcontents\relax
}
\def\contentsname{Contents}
\def\l@section{\@tocline{2}{.5ex}{0mm}{5pc}{}}
\def\l@subsection{\@tocline{2}{0pt}{2em}{5pc}{}}
\let\@@pmod\pmod
\DeclareRobustCommand{\pmod}{\@ifstar\@pmods\@@pmod}
\def\@pmods#1{\mkern4mu({\operator@font mod}\mkern 6mu#1)}
\newcommand{\leftsquigarrow}{\mathrel{\rotatebox[origin=c]{180}{$\rightsquigarrow$}}} 
\renewcommand{\epsilon}{\varepsilon}
\newcommand{\Q}{\mathbb{Q}}
\newcommand{\wt}{\mathrm{wt}}
\declaretheorem[
style=plain,
name=Theorem,
numberwithin=section,
refname={Theorem,Theorems},
Refname={Theorem,Theorems}
]{Thm}
\declaretheorem[
style=plain,
name=Proposition,
numberlike=Thm,
refname={Proposition,Propositions},
Refname={Proposition,Propositions}
]{Prop}
\declaretheorem[
style=plain,
name=Lemma,
numberlike=Thm,
refname={Lemma,Lemmas},
Refname={Lemma,Lemmas}
]{Lem}
\declaretheorem[
style=plain,
name=Claim,
numberlike=Thm,
refname={Claim,Claims},
Refname={Claim,Claims}
]{Clm}
\declaretheorem[
style=definition,
name=Definition,
numberlike=Thm,
refname={Definition,Definitions},
Refname={Definition,Definitions},
]{Def}
\declaretheorem[
style=definition,
name=Example,
numberlike=Thm,
refname={Example,Examples},
Refname={Example,Examples},
]{Eg}
\declaretheorem[
style=plain,
name=Conjecture,
numberlike=Thm,
refname={Conjecture,Conjectures},
Refname={Conjecture,Conjectures},
]{Conj}
\DeclareMathOperator{\Part}{Part}
\newcommand{\sym}{{\mathrm{sym}}}
\newcommand{\bl}{{\mathrm{bl}}}
\renewcommand{\vec}[1]{\mathbf{#1}}
\begin{document}
	
	\title[Analogues of cyclic insertion type identities for multiple zeta star values]{Analogues of cyclic insertion type identities \\ for multiple zeta star values}
	\date{June 27, 2018}
	\author{Steven Charlton}
	\address{Max-Planck-Institut f\"ur Mathematik \\
		Vivatsgasse 7 \\
		53111 Bonn \\
		Germany}
	
	\email{charlton@mpim-bonn.mpg.de}
	
	\keywords{Multiple zeta star values, cyclic insertion, block decomposition, generalised 2-1 formula}
	\subjclass[2010]{Primary 11M32}
	
	\begin{abstract}
		We prove an identity for multiple zeta star values, which generalises some identities due to Imatomi, Tanaka, Tasaka and Wakabayashi.  This identity gives an analogue of cyclic insertion type identities, for multiple zeta star values, and connects the block decomposition with Zhao's generalised 2-1 formula.
	\end{abstract}
	
	\maketitle
	
	\section{Introduction}
	
	Multiple zeta values (MZV's), and multiple zeta star values (MZSV's) are defined by the following series respectively
	\begin{align*}
		\zeta(s_1, \ldots, s_r) &\coloneqq \sum_{0 < n_1 < n_2 < \cdots < n_r} \frac{1}{n_1^{s_1} \cdots n_r^{s_r}} \, , \\
		\zeta^\star(s_1, \ldots, s_r) &\coloneqq \sum_{0 < n_1 \leq n_2 \leq \cdots \leq n_r} \frac{1}{n_1^{s_1} \cdots n_r^{s_r}} \, .
	\end{align*}
	In each case \( r \) is called the \emph{depth}, and \( s_1 + \cdots + s_r \) is called the \emph{weight}.  We make use of the shorthand `\( \wt \)' for the weight of the MZV's appearing in an identity, and write \( \{s\}^n \) to mean \( s \) repeated \( n \) times.  \medskip
	
	In \cite{imatomi2009some}, the following identities for multiple zeta star values are conjectured
	\begin{Conj}[Conjectures 4.1 and 4.3, \cite{imatomi2009some}]\label{conj:imatomi}
		For any integers \( a_0, \ldots, a_{2n} \geq 0 \) , we have
		\begin{gather*}
			\sum_{\text{permute \( a_0, \ldots, a_{2n} \)}} \zeta^\star(\{2\}^{a_0 + 1}, 1, \{2\}^{a_1}, 3, \{2\}^{a_2}, \ldots, 1, \{2\}^{a_{2n-1}}, 3, \{2\}^{a_{2n}}) \overset{?}{\in} \Q \pi^{\wt} \\
			\sum_{\text{permute \( a_1, \ldots, a_{2n} \)}} \zeta^\star(1, \{2\}^{a_1}, 3, \{2\}^{a_2}, \ldots, 1, \{2\}^{a_{2n-1}}, 3, \{2\}^{a_{2n}}) \overset{?}{\in} \Q \pi^{\wt} \, , 
		\end{gather*}
		 Notice the blocks of \( 2 \) all have lengths \( a_i \), except for the initial one; it has length \( a_0 + 1 \) in the first identity and length 0 in the second identity.
	\end{Conj}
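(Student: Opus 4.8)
The plan is to translate both star-value identities into statements about ordinary multiple zeta values and to prove those. Two tools drive the translation. The first is the standard expansion of a star value into non-star values,
\[
\zeta^\star(s_1,\dots,s_r) \;=\; \sum_{\square_1,\dots,\square_{r-1}} \zeta(s_1 \,\square_1\, s_2 \,\square_2\, \cdots \,\square_{r-1}\, s_r) \, ,
\]
where each \(\square_i\) is independently either a comma or a `\(+\)'. The second is Zhao's generalised 2-1 formula, which rewrites an MZSV whose index is assembled from blocks \(\{2\}^{a}\) separated by other entries as a (signed) combination of MZVs in which those blocks turn into runs of equal entries. First I would feed the generic summand \(\zeta^\star(\{2\}^{a_0+1}, 1, \{2\}^{a_1}, 3, \dots)\) (respectively \(\zeta^\star(1, \{2\}^{a_1}, 3, \dots)\)) into both mechanisms and record the resulting MZVs through their block decompositions. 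Passing to the block decomposition is the key move: after these steps the material attached to \(a_i\) contributes a single block whose length is a fixed affine function of \(a_i\), so each term is encoded by a cyclic word \((\ell_0, \ell_1, \dots, \ell_{2n})\) with \(\ell_i\) determined by \(a_i\), and permuting \(a_0,\dots,a_{2n}\) on the star side becomes permuting \(\ell_0,\dots,\ell_{2n}\) on the non-star side.

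Next I would carry out the permutation sum on the MZV side. After the translation, each left-hand side becomes a sum — over all permutations of a fixed multiset of block lengths inserted into a fixed skeleton coming from the \(1\)'s and \(3\)'s — of \(\zeta^{\bl}\) of the resulting word, weighted by the signs produced by the 2-1 formula. I expect those signs, together with the auxiliary terms of the 2-1 formula, to organise so that the symmetrised sum is a \emph{fully} symmetric combination of block-decomposed MZVs, in contrast to the (still open) cyclic insertion conjecture of Borwein, Bradley, Broadhurst and Lison\v{e}k for ordinary MZVs, where one averages over only the \(2n+1\) cyclic rotations. It is exactly this larger symmetry group that should make the sum computable: using the relations among block-decomposed MZVs (reversal/duality and the further block relations) together with \(\zeta(\{2\}^m) = \pi^{2m}/(2m+1)!\) — and, if needed, the unconditional Bowman--Bradley evaluation as an input — the symmetrised sum should collapse to products of such \(\pi\)-powers, and therefore lie in \(\Q\pi^{\wt}\). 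In practice this step is probably run by induction on \(n\), peeling off one block at a time, or packaged into a single generating-function identity reflecting the underlying \(\sinh\)-type product formula; it would also produce the rational constant, although only membership in \(\Q\pi^{\wt}\) is asked for.

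The two displayed identities differ only in the leading block — \(\{2\}^{a_0+1}\) with \(a_0\) among the permuted variables in the first, an empty leading block in the second — so both come out of the same argument applied to the two skeletons, the `\(+1\)' versus `\(0\)' discrepancy being precisely what makes the boundary block length match the interior ones.

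The step I expect to be the main obstacle is the middle one: verifying that the signed 2-1-expansion, once summed over all permutations of the \(a_i\), really does assemble into a clean symmetric sum of \(\zeta^{\bl}\)'s with no residual `defect' terms, and then evaluating that symmetric sum. Keeping track of the constants attached to each block (which depend on whether a block of \(2\)'s abuts a \(1\) or a \(3\), and on the two endpoints) and checking that the cancellations are exact is delicate; and the evaluation of the resulting symmetric block sum — showing that it is a rational multiple of \(\pi^{\wt}\) — is where the real work lies, and is exactly the point at which the connection between the block decomposition and Zhao's generalised 2-1 formula has to be used in an essential way, rather than merely invoked.
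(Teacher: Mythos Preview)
Your starting point is right: the link between the block decomposition and Zhao's \(\vec{s}^{(1)}\) is exactly what the paper exploits, and once that link is made, permuting the \(a_i\) on the star side is the same as permuting the block lengths \(\ell_i = 2a_i+2\) inside Zhao's formula. But from there your plan diverges from the paper in a way that leaves a real gap.

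First, the standard comma/plus expansion of \(\zeta^\star\) into ordinary MZVs plays no role; only Zhao's 2-1 formula is used, and its output is a sum of \emph{alternating} MZVs
\[
\zeta^\star(\vec{s}) \;=\; \epsilon(\vec{s}) \sum_{\vec{p}\in\Pi(\vec{s}^{(1)})} 2^{\#\vec{p}}\,\zeta(\vec{p}),
\]
not a signed combination of ordinary block-decomposed MZVs. So the object you propose to evaluate with ``reversal/duality and the further block relations'' or with Bowman--Bradley never actually appears. What one has after summing over \(S_n\) is a double sum over permutations \(\sigma\) and over \(\vec{p}\in\Pi(\sigma\cdot(\widetilde{\ell}_1,\dots,\widetilde{\ell}_n))\) of alternating MZVs of varying depth.

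Second, and this is the missing idea, the evaluation of that double sum is not done by block relations or by any induction on \(n\). The paper reorganises it combinatorially as a sum over set partitions \(\vec{q}\in\Part(n)\) of \emph{symmetrised} alternating MZVs \(\zeta_{\sym}\bigl(\bigoplus_{j\in q_1}\widetilde{\ell}_j,\dots\bigr)\), and then applies Hoffman's symmetric sum formula (in Zhao's alternating version) to write each \(\zeta_{\sym}\) as a sum of products of \emph{depth-one} alternating zeta values. A separate combinatorial identity --- evaluating \(g(m)=\sum_{\vec{w}\in\Part(m)}(-2)^{\#\vec{w}}(\#\vec{w}-1)!\prod_l \#w_l!\) and showing it vanishes for even \(m\) --- then kills all partitions with an even-sized part, leaving only products of \(\zeta(\text{odd})\) and \(\zeta(\overline{\text{even}}) = -\tfrac12\zeta^\star(\{2\}^{\bullet})\in\Q\pi^{\text{even}}\). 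In the all-even-\(\ell_i\) case relevant to the conjecture, every surviving factor is a \(\zeta^\star(\{2\}^{\bullet})\), hence the whole sum lies in \(\Q\pi^{\wt}\).

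Your proposal correctly anticipates that full \(S_n\)-symmetry (rather than cyclic symmetry) is what makes the sum computable, but the mechanism is the symmetric sum formula reducing to depth-one zetas, not block relations among higher-depth MZVs. Without that step your sketch does not close: block relations and Bowman--Bradley are not known to evaluate an arbitrary \(S_n\)-symmetrised sum of alternating MZVs, and you yourself flag this as ``where the real work lies'' without supplying a method.
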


	These identities are similar in structure to the cyclic insertion conjecture of \cite{borwein1998combinatorial}, on classical MZV's, and should perhaps be regarded as an analogue.  The cyclic insertion conjecture states
	\begin{Conj}[Conjecture 1, \cite{borwein1998combinatorial}]\label{conj:bbbl}
		For any integers \( a_0, \ldots, a_{2n} \geq 0 \) , we have
		\[
		\sum_{\text{cycle \( a_0, \ldots, a_{2n} \)}} \zeta(\{2\}^{a_0}, 1, \{2\}^{a_1}, 3, \{2\}^{a_2}, \ldots, 1, \{2\}^{a_{2n-1}}, 3, \{2\}^{a_{2n}}) \overset{?}{=} \frac{\pi^{\wt}}{(\wt + 1)!} \, .
		\]
	\end{Conj}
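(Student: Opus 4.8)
\medskip
\noindent\emph{A strategy.} This conjecture is still open in general; the following is the line of attack I would pursue. Every term on the left has weight \( 2m \) with \( m \coloneqq a_0 + \cdots + a_{2n} + 2n \), so the assertion is of the shape ``cyclic sum \( \in \Q\,\pi^{2m} \), with a prescribed rational coefficient''. The plan is to work with motivic multiple zeta values: replace each \( \zeta \) by its motivic lift \( \zeta^{\mathfrak{m}} \), write \( \Xi^{\mathfrak{m}}(a_0,\ldots,a_{2n}) \) for the resulting motivic cyclic sum, and aim first to prove the membership \( \Xi^{\mathfrak{m}} \in \Q\,\zeta^{\mathfrak{m}}(2)^m \). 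Applying the period homomorphism together with \( \zeta(2) = \pi^2/6 \) then yields a real identity \( (\text{cyclic sum}) = c\,\pi^{2m} \) for some \( c \in \Q \), after which it remains only to identify \( c = 1/(2m+1)! \); this last normalization is a secondary matter and should follow from the generating-function repackaging of the identity, anchored on the case \( n = 0 \) — where the left-hand side is the single value \( \zeta(\{2\}^m) \) and the coefficient is the classical \( 1/(2m+1)! \).

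So the crux is the membership \( \Xi^{\mathfrak{m}} \in \Q\,\zeta^{\mathfrak{m}}(2)^m \). Here Brown's description of the motivic coaction gives the standard handle: \( \Q\,\zeta^{\mathfrak{m}}(2)^m \) is precisely the kernel, in weight \( 2m \), of the reduced coaction \( \bigoplus_{0<k<2m} D_k \), and as \( D_1 \) and all even \( D_k \) vanish identically it suffices to prove \( D_k\,\Xi^{\mathfrak{m}} = 0 \) for every odd \( k \) with \( 3 \le k \le 2m-1 \). This is where the block decomposition of the abstract is meant to take over: writing the iterated-integral word of each \( \zeta \) appearing in the conjecture in block coordinates exhibits it as a ``necklace'', so that the cyclic symmetrization becomes a genuine rotation of blocks. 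One would compute \( D_k \) termwise from Goncharov's coproduct formula — each summand being a length-\( k \) sub-MZV (again of a similar shape, with entries in \( \{1,2,3\} \)) tensored with a quotient MZV — and hope that the rotation permutes the cut points so as to group the quotient factors into cancelling pairs, with Zhao's generalised 2-1 formula collapsing the residual block-MZVs \( \zeta(\{2\}^a,1,\{2\}^b) \) and \( \zeta(\{2\}^a,3,\{2\}^b) \) to the elementary values that the cancellation requires.

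The main obstacle — and precisely the reason the conjecture remains open — is making this cancellation work uniformly in \( n \) and in the block lengths \( a_i \). For small \( n \) it can be checked by hand using the known closed forms for the special multiple zeta values that arise, but for general \( n \) the number of coaction terms grows with all the parameters, and no stable, rotation-compatible cancellation pattern has been found; a proof would most plausibly proceed by induction on \( n \) (or on the weight), with the block decomposition supplying enough hidden structure to keep the combinatorics finite. It is worth noting that the multiple zeta \emph{star} analogues of Conjecture~\ref{conj:imatomi} are considerably more accessible — indeed they are what the present paper establishes — because the 2-1-type relations apply to them more directly, so that there the required powers of \( \pi \) emerge from honest relations rather than from motivic input.
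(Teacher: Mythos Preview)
There is no proof in the paper to compare against: Conjecture~\ref{conj:bbbl} is quoted from \cite{borwein1998combinatorial} purely as background and motivation, with the ``\( \overset{?}{=} \)'' retained, and the present paper neither proves nor attempts to prove it. You correctly recognised this and offered a strategy rather than a proof, which is the appropriate response.

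Your outlined approach --- lift to motivic MZVs, show that \( D_k \) annihilates the sum for all odd \( k \) via Brown's coaction computed on the iterated-integral word, exploit the block decomposition to organise the cancellation, then pin down the rational constant --- is exactly the route the author took in \cite{charlton2015zeta} and \cite{charlton2017alternating} for the \emph{symmetrised} version (sum over \( S_{2n+1} \) rather than over cyclic rotations), and you correctly identify the obstruction to pushing this through for the cyclic sum itself. Two corrections are in order, however. First, the paper explicitly notes that the (generalised) cyclic version has ``since [been] proven exactly by Hirose and Sato'', so your claim that the conjecture is ``still open in general'' is at odds with the paper's own account. Second, Zhao's generalised 2-1 formula is a statement about multiple zeta \emph{star} values and is the engine driving the present paper's MZSV results; it has no direct bearing on the MZV cyclic insertion problem, and invoking it to collapse the sub-MZVs \( \zeta(\{2\}^a,1,\{2\}^b) \) and \( \zeta(\{2\}^a,3,\{2\}^b) \) conflates the two settings --- those evaluations come from Zagier's theorem and related identities, not from the 2-1 formula.
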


	In \cite{charlton2015zeta}, a symmetrised version of \autoref{conj:bbbl} was proven by the author, up to a rational, using the motivic MZV framework of Brown \cite{brown2012mixed,brown2012decomposition}.  This symmetrised result was generalised to a wider class of MZV's using the \emph{alternating block decomposition} of iterated integrals \cite{charlton2017alternating}, along with conjectural cyclic version, since proven exactly by Hirose and Sato. \medskip
	
	Zhao's generalised 2-1 formula, Theorem 1.4 in \cite{zhao2016identity}, gives an expression for an arbitrary MZSV as a sum of alternating MZV's, with arguments from a certain indexing set \( \Pi(\vec{s}^{(1)}) \).  As a consequence, Zhao gives a concise proof of \autoref{conj:imatomi}.  The goal of this paper is generalise Zhao's proof of \autoref{conj:bbbl}, by connecting \( \vec{s}^{(1)} \) with the `block decomposition' of the multiple zeta value \( \zeta(\vec{s}) \).  This allows us to give analogues of other MZV cyclic insertion identities in the MZSV case. \medskip
	
	Before stating the main result, we must first recall the construction of the block decomposition from \cite{charlton2017alternating}.  Any word in \( \{0,1\}^\times \) can be written as a concatenation of some number of `alternating words' \( 0, 1, 01, 10, 010, 101, 0101, 1010, \ldots \).  By deconcatenating \( w \) at a repeated letter, one obtains the (unique) decomposition of \( w \) into the minimal possible number of such words.  Moreover, by assuming \( w \) starts with 0, the lengths of the alternating words uniquely determine \( w \) since the concatenation occurs at a repeated letter.
	
	\begin{Def}[Block decomposition, \cite{charlton2017alternating}]
		For \( w \in \{0, 1\}^\times \), starting with a 0, write \( w \) as a concatenation of the fewest alternating words \( w_1, \ldots, w_n \), with lengths \( \ell_1, \ldots, \ell_n \) respectively.  The block decomposition of \( w \) is
		\[
			\bl(w) \coloneqq (\ell_1, \ldots, \ell_n) \, .
		\]
	\end{Def}

	\begin{Eg}
		For \( w = 01100101010010101 \), we have
		\[
			\bl(w) = \bl(\underbrace{01}_{2} \mid \underbrace{10}_{2} \mid \underbrace{0101010}_{7} \mid \underbrace{010101}_{6}) = (2, 2, 7, 6) \, .
		\]
		From the lengths \( (2, 2, 7, 6) \) we recover a unique word starting with 0, by writing
		\[
			\bl^{-1}(2, 2, 7, 6) = 0\underbracket{1 \mid 1}_{\mathclap{\text{repeat} \hspace{1em}}}\underbracket{0 \mid 0}_{\mathclap{\hspace{1em} \text{repeat}}}10101\underbracket{0 \mid 0}_{\text{repeat}}10101 = w
		\]
	\end{Eg}
	
	We can now state the main result of this paper.
		
	\begin{Thm}\label{thm:mzsvcyc}
		For integers \( \ell_i > 1 \), the following identity on MZSV's holds
		\[
		\sum_{\sigma \in S_n} \zeta^\star(\bl^{-1}( 2 \circ \ell_{\sigma(1)}, \ldots, \ell_{\sigma(n)}) ) = \sum_{\vec{r} \in \Part_{\text{odd}}(n)} 2^{\# r} \prod_i (\# r_i - 1)! \prod_i \widetilde{\zeta}\bigg(\sum\limits_{j \in r_i} \ell_j \bigg) \, .
		\]
		Here \( \Part_{\text{odd}}(n) \) is the set of partitions of \( \Set{1, \ldots, n} \) into odd-sized parts.  Moreover
		\[
		\zeta^\star(0 \, 10^{k_1-1} \cdots 10^{k_d-1} \, 1) \coloneqq \zeta^\star(k_1,\ldots,k_d) \, ,
		\]
		as in the iterated integral representation of an MZV, although this also includes the \emph{bounds} of integration.
		Finally, \( \widetilde{\zeta} \) and \( \circ \) are defined by
		\[
		\widetilde{\zeta}(n) = \begin{cases}
		\zeta(n) & \text{\( n \) odd} \\
		\frac{1}{2} \zeta^\star(\{2\}^{n/2}) & \text{\( n \) even,}
		\end{cases}
		\quad \text{ and } \quad
		\circ = \begin{cases}
		, & n \equiv \sum \ell_i \pmod*{2} \\
		+ & n \not\equiv \sum \ell_i \pmod*{2} \, .
		\end{cases}
		\]
		
		In particular, the sum is always a polynomial in Riemann zeta values, since \( \zeta^\star(\{2\}^n) \in \Q \pi^{2n} \).
	\end{Thm}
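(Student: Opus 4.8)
The plan is to reduce the left-hand side to a symmetrised sum of alternating multiple zeta values by means of Zhao's generalised 2-1 formula, and then to evaluate that sum using the block-decomposition cyclic insertion machinery. The first step is to apply Theorem 1.4 of \cite{zhao2016identity} to each summand $\zeta^\star(\bl^{-1}(2 \circ \ell_{\sigma(1)}, \ldots, \ell_{\sigma(n)}))$, rewriting it --- up to an explicit sign and power of $2$ --- as a sum of alternating MZVs indexed by Zhao's set $\Pi(\vec{s}^{(1)})$. The crucial and genuinely new ingredient is to make the dictionary between $\Pi(\vec{s}^{(1)})$ and the block decomposition $\bl(\vec{s})$ explicit: I would show that when $\vec{s} = \bl^{-1}(\ell_0, \ell_1, \ldots, \ell_n)$ the elements of $\Pi(\vec{s}^{(1)})$ are naturally parametrised by the sets of block boundaries one is permitted to ``merge'', and that the associated alternating MZV is again given in block form, with its alternating signs dictated by which boundaries were merged. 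Part of the same bookkeeping is to check that the parity condition built into Zhao's construction is exactly the stated rule for $\circ$ --- that $\circ$ is a comma precisely when $n \equiv \sum \ell_i \pmod{2}$ --- which is the condition that makes $\bl^{-1}(2 \circ \ell_{\sigma(1)}, \ldots)$ end in a $1$, hence be a legitimate MZSV word (and, since every block here has length $\geq 2$, a convergent one).

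Once every summand has been expanded, the left-hand side is a sum over $\sigma \in S_n$ of alternating block MZVs. Writing $\sum_{\sigma \in S_n}$ as a sum over all linear orders of $\Set{1, \ldots, n}$ and grouping each linear order into clusters according to the combinatorial structure inherited from $\Pi$, this reorganises as $\sum_{\vec{r} \in \Part(n)} \prod_i \big(\text{a cyclic sum attached to the part } r_i \big)$, in which a part of size $m$ produces $(m-1)!$ distinct cyclic orders. Each such cyclic sub-sum is a symmetrised cyclic insertion sum of alternating block MZVs of exactly the kind studied in \cite{charlton2017alternating}, and I would evaluate it exactly using the known special values $\zeta^\star(\{2\}^k) \in \Q\pi^{2k}$ together with the cyclic-sum identities for (alternating) block MZVs from \cite{charlton2017alternating} and the exact cyclic evaluation of Hirose and Sato. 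The feature to extract is that the cyclic sum over an \emph{odd}-length orbit collapses to the single value $\widetilde{\zeta}\big(\sum_{j \in r_i} \ell_j\big)$, while the cyclic sum over an \emph{even}-length orbit vanishes; this is what confines $\vec{r}$ to $\Part_{\text{odd}}(n)$. The $\tfrac12$ concealed in $\widetilde{\zeta}$, namely $\zeta(n)$ for $n$ odd against $\tfrac12 \zeta^\star(\{2\}^{n/2})$ for $n$ even, then falls out of the two cases according to whether the total block length $\sum_{j \in r_i} \ell_j$ of a surviving cluster is odd or even.

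It remains only to collect constants: the $(m-1)!$ cyclic orderings have already been accounted for, and the factor $2$ per surviving cluster --- hence $2^{\#r}$ overall --- is the global power of $2$ in Zhao's 2-1 formula, distributed one factor to each cluster after the reorganisation. Assembling everything yields $\sum_{\vec{r} \in \Part_{\text{odd}}(n)} 2^{\#r} \prod_i (\#r_i - 1)! \prod_i \widetilde{\zeta}\big(\sum_{j \in r_i} \ell_j\big)$, and the final remark is immediate since $\zeta^\star(\{2\}^k) \in \Q\pi^{2k}$ forces the whole expression to be a polynomial in Riemann zeta values.

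The step I expect to be the main obstacle is the first one: pinning down the bijection between $\Pi(\vec{s}^{(1)})$ and the block-merging data, and tracking the sign patterns of the resulting alternating MZVs carefully enough that, after symmetrising, both the even-orbit cancellations and the precise rational coefficients emerge. The parity subtleties around $\circ$ and the odd/even split inside $\widetilde{\zeta}$ are exactly where a careless count goes wrong, so essentially all the real work is in getting this correspondence right; once it is in place, the symmetrisation and the appeal to the known cyclic-insertion evaluations should be comparatively routine.
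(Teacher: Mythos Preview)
Your opening move --- applying Zhao's generalised 2--1 formula and identifying $\vec{s}^{(1)}$ with the block lengths $(\widetilde{\ell_1},\ldots,\widetilde{\ell_n})$ --- matches the paper exactly, and you are right that this is where the care with parities and signs is needed. The gap is in what comes next.

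After Zhao's formula each summand on the left is $\epsilon \sum_{\vec{p}\in\Pi(\sigma\cdot(\widetilde{\ell_1},\ldots,\widetilde{\ell_n}))} 2^{\#\vec p}\,\zeta(\vec p)$, where $\zeta(\vec p)$ is a genuine depth-$\#\vec p$ alternating MZV, \emph{not} a product. Your proposed reorganisation ``$\sum_{\vec r\in\Part(n)}\prod_i(\text{cyclic sum over }r_i)$'' presupposes a multiplicative splitting that simply is not there at this stage: grouping $\sigma$ and the comma/$\oplus$ choices by a set partition $\vec r$ yields the symmetrised MZV $\zeta_{\sym}\big(\bigoplus_{j\in r_1}\widetilde{\ell_j},\ldots,\bigoplus_{j\in r_t}\widetilde{\ell_j}\big)$, not $\prod_i \zeta(\cdots)$. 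Turning that symmetrised MZV into a product of depth-one zetas is a separate step, and it is done not by any cyclic-insertion identity but by Hoffman's symmetric sum formula (in Zhao's alternating version). The restriction to $\Part_{\text{odd}}(n)$ then has nothing to do with ``even-length cyclic orbits vanishing'': it comes from a purely combinatorial cancellation between the $(-1)^{\#\vec q-\#\vec t}(\#t_j-1)!$ factors in the symmetric sum formula and the $2^{\#\vec q}\prod\#q_i!$ factors already present, which the paper isolates as the identity $\sum_{\vec w\in\Part(m)}(-2)^{\#\vec w}(\#\vec w-1)!\prod_l \#w_l! = 0$ for $m$ even and $-2(m-1)!$ for $m$ odd.

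In short, the results of \cite{charlton2017alternating} and Hirose--Sato concern classical (non-alternating) MZVs and evaluate cyclic or symmetric sums of block MZVs of fixed depth; they do not furnish the product decomposition you need here, and invoking them leaves the central reduction unproved. Replace that appeal with the symmetric sum formula for alternating MZVs plus the combinatorial identity above, and the rest of your outline (the $\circ$ parity check, the $\widetilde\zeta$ bookkeeping via $\zeta^\star(\{2\}^k)=-2\zeta(\overline{2k})$, and the final sign count) goes through.
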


	In the case all \( \ell_i \) even, we recover the identities in \autoref{conj:imatomi}, and can give explicit terms in the right hand side in various cases.

	\begin{Eg}
		If \( (\ell_1, \ell_2, \ell_3) = (2a + 2, 2a + 2, 2a + 2) \), we are in the case \( n = 3 \), and \( \circ = \text{``\,+\,''} \).  Then
		\[
			\zeta^\star(\bl^{-1}(2 + 2a + 2, 2b + 2, 2c + 2)) = \zeta^\star(\{2\}^{a + 1}, 1, \{2\}^{b}, 3, \{2\}^{c}) \, .
		\]
		The theorem tell us that we need to sum over \( \vec{r} \in \Part_{\text{odd}}(3) = \Set{ 1\mid2\mid3 \, , 123 } \), and so we obtain
		\begin{align*}
		& {}{} = 2^3(1-1)!^3 \cdot \tfrac{1}{2} \zeta^\star(\{2\}^{a + 1}) \cdot \tfrac{1}{2} \zeta^\star(\{2\}^{b+1}) \cdot \tfrac{1}{2} \zeta^\star(\{2\}^{c+1}) \tag*{\( \leftsquigarrow \vec{r} = \{ 1 \mid 2 \mid 3 \} \)} + {}  \\[1ex]
		& \quad\quad {} + 2^1 (3-1)! \cdot \tfrac{1}{2} \zeta^\star(\{2\}^{a + b + c + 3}) \tag*{\( \leftsquigarrow \vec{r} = \{123\} \)}
		\end{align*}
		which simplifies to
		\[
		= \zeta^\star(\{2\}^{a+1})\zeta^\star(\{2\}^{b+1})\zeta^\star(\{2\}^{c+1}) + 2\zeta^\star(\{2\}^{a+b+c+3}) \, .
		\]
		
		This gives the identity
		\begin{align*}
			&\sum_{\text{permute \( a, b, c \)}} \zeta^\star(\{2\}^{a + 1}, 1, \{2\}^{b}, 3, \{2\}^{c}) \\
			& \quad = \zeta^\star(\{2\}^{a+1})\zeta^\star(\{2\}^{b+1})\zeta^\star(\{2\}^{c+1}) + 2\zeta^\star(\{2\}^{a+b+c+3}) \in \Q \pi^\wt \, ,
		\end{align*}
		as in case 1 of \autoref{conj:imatomi}.  \medskip
		
		If \( (\ell_1,\ldots,\ell_4) = (2a + 2, 2b + 2, 2c + 2, 2d+2) \), we are in the case \( n = 4 \), and \( \circ = \text{``\,,\,''} \).  Then
		\[
			\zeta^\star(\bl^{-1}(2, 2a + 2, 2b + 2, 2c + 2, 2d+2)) = \zeta^\star(1,\{2\}^{a}, 3, \{2\}^{b}, 1, \{2\}^{c}, 3, \{2\}^d) \, .
		\]
		We sum over \( \vec{r} \in \Part_{\text{odd}}(4) = \Set{1\mid234, 2 \mid 134, 3 \mid 124, 4 \mid 234, 1 \mid 2 \mid 3 \mid 4} \), and obtain
		\begin{align*}
			&\sum_{\text{permute \( a, b, c, d \)}} \zeta^\star(1,\{2\}^{a}, 3, \{2\}^{b}, 1, \{2\}^{c}, 3, \{2\}^d) \\
			&\quad = 2 \big( \zeta(\{2\}^{a + b + c + 3}) \zeta^\star(\{2\}^{d+1}) + \zeta(\{2\}^{a + b + d + 3}) \zeta^\star(\{2\}^{c+1}) + {} \\
			& \quad\quad {} + \zeta(\{2\}^{a + c + d + 3}) \zeta^\star(\{2\}^{b+1}) + \zeta(\{2\}^{b + c + d + 3}) \zeta^\star(\{2\}^{a+1}) \big) + {}  \\
			& \quad\quad {} + \zeta^\star(\{2\}^{a+1}) \zeta^\star(\{2\}^{b+1}) \zeta^\star(\{2\}^{c+1}) \zeta^\star(\{2\}^{d+1}) \in \Q \pi^\wt \, ,
			\end{align*}
			as in case 2 of \autoref{conj:imatomi}.  \medskip
	\end{Eg}

	\begin{Eg}[Hoffman's identity]
		For integers \( a, b, c \geq 0 \), Hoffman's identity on MZV's (generalised and proven up to \( \mathbb{Q} \) in \cite{charlton2017alternating}, and proven exactly in \cite{hirose2017hoffman}) states
		\begin{align*}
			&\zeta(\{2\}^a, 3, \{2\}^b, 3, \{2\}^c) - \zeta(\{2\}^b, 3, \{2\}^c, 1, 2, \{2\}^a) + \zeta(\{2\}^c, 1, 2, \{2\}^a, 1, 2, \{2\}^b) \\
			& \quad = -\zeta(\{2\}^{a+b+c+3}) \, .
		\end{align*}
		It arises from the block decomposition \( (\ell_1,\ell_2,\ell_3) = (2a + 3, 2b + 3, 2c + 2) \) of the first MZV above. \medskip
		
		We can apply \autoref{thm:mzsvcyc} to \( (\ell_1,\ell_2,\ell_3) = (2a + 3, 2b + 3, 2c + 2) \) to obtain an analogue on MZSV's.  We are in the case \( \circ = \text{``\,+\,''} \) , and we obtain the following combination of MZSV's.
	\begin{align*}
		& \zeta^\star(\{2\}^{a+1}, 3, \{2\}^b, 3, \{2\}^c) && {} + \zeta^\star(\{2\}^{b+1}, 3, \{2\}^a, 3, \{2\}^c) + {} \\
		&{} + \zeta^\star(\{2\}^{b + 1},3,\{2\}^c,1,2, \{2\}^{a}) && {} + \zeta^\star(\{2\}^{a + 1},3,\{2\}^c,1,2, \{2\}^{b}) + {} \\
		&{} + \zeta^\star(\{2\}^{c + 1}, 1,2, \{2\}^a, 1, 2, \{2\}^b) \hspace{-1em}  && {} + \zeta^\star(\{2\}^{c + 1}, 1,2, \{2\}^b, 1, 2, \{2\}^a)
	\end{align*}
		The theorem tell us that we need to sum over \( \vec{r} \in \Part_{\text{odd}}(3) = \Set{ 1\mid2\mid3 \, , 123 } \), and so we obtain
		\begin{align*}
			& {}{} = 2^3(1-1)!^3 \zeta(2a + 3)\zeta(2b + 3) \cdot \tfrac{1}{2}\zeta^\star(\{2\}^{c+1}) \tag*{\( \leftsquigarrow \vec{r} = \{ 1 \mid 2 \mid 3 \} \)} + {}  \\[1ex]
			& \quad\quad {} + 2^1 (3-1)! \cdot \tfrac{1}{2} \zeta^\star(\{2\}^{a + b + c + 4}) \tag*{\( \leftsquigarrow \vec{r} = \{123\} \)}
		\end{align*}
		which simplifies to
		\[
		= 4 \zeta(2a + 3)\zeta(2b + 3)\zeta^\star(\{2\}^{c+1}) + 2 \zeta^\star(\{2\}^{a + b + c + 4})
		\]
	\end{Eg}

	Similar identities can be given for a wide range of initial block lengths, allowing one to produce identities for many MZSV's with indices 1, 2, 3.  For example
	\begin{Eg}
		Starting with \( \zeta^\star(1, 3, 3, \{2\}^m) \), one reads off the block decomposition
		\[
		\bl^{-1}(0 ; 1 \, 100 \, 100 \, (10)^m ; 1) = (2, 2, 3, 2m+2) \, .
		\]
		By taking \( (\ell_1, \ell_2, \ell_3) = (2, 3, 2m+2) \), we are in the case \( \circ = \text{``\,,\,''} \) as \( \sum \ell_i \equiv 3 \pmod*{2} \).  The theorem gives us the following identity containing \( \zeta^\star(1,3,3,\{2\}^m) \):
		\begin{align*}
			&\zeta^\star(1, 3, 3, \{2\}^m) + \zeta^\star(1, 2, 1, \{2\}^m, 3) + \zeta^\star(1, \{2\}^m,3, 3) + \\
			&\zeta^\star(1, 2, 1, 3, \{2\}^m) + \zeta^\star(1, 3, \{2\}^m, 1, 2) + \zeta^\star(1, \{2\}^m, 3, 1, 2) \\
			& \quad = 2 \zeta(2) \zeta(3) \zeta^\star(\{2\}^{m+1}) + 4 \zeta(2m + 7) \, .
		\end{align*}
	\end{Eg}
	
	\paragraph{Acknowledgements}  This work was completed during the trimester program ``Periods in Number Theory, Algebraic Geometry and Physics'' at the Hausdorff Institute for Mathematics, concurrent with the author's stay at the Max Planck Institute for Mathematics.  It was motivated by observations made during the author's stay at the MZV research centre, Kyushu university.  I am grateful all three institutes for their hospitality and excellent working conditions.
	
	I am also grateful to Nobuo Sato for helpful discussions during the trimester, and for directing me to Zhao's generalised 2-1 identity which plays a key role in the proof.
	
	\section{Proof of the theorem}
	
	Much of the proof relies on Zhao's generalisation of the 2-1 formula, proven in \cite{zhao2016identity}.  The key step is to relate the block decomposition to Zhao's \( \vec{s}^{(1)} \), with the following lemma.
	
	\paragraph{Warning:} Since I use the opposite contention for MZV's, the version of \( \vec{s}^{(1)} \) used here is the reverse of the one obtained from Zhao's definition.  These changes are incorporated into the proofs.
	
	\begin{Lem}
		For \( \circ = \text{``\,,\,'' or ``\,+\,''} \), let \( \vec{L} = (2 \circ \ell_1, \ldots, \ell_n) \) be a block decomposition with all \( \ell_i > 1 \), and corresponding MZV arguments \( \vec{s} = (s_1, \ldots, s_{n'}) \).  Then Zhao's \( \vec{s}^{(1)} \) associated to \( \vec{s} \) is given by
		\[
			\vec{s}^{(1)} = (\widetilde{\ell_1}, \ldots, \widetilde{\ell_i}) \, 
		\]
		where
		\[
			\widetilde{\ell_i} = \begin{cases}
				\ell_i & \text{if \( \ell_i \) odd} \\
				\overline{\ell_i} & \text{if \( \ell_i \) even}
			\end{cases}
		\]
		
		\begin{proof}
			Since block notation, and arguments strings are in \( 1 \leftrightarrow 1 \) correspondence (and block decompositions with \( \ell_i > 1 \) correspond to \( s_i \in \Set{1,2,3} \), with no \( 1, 1 \)), we can build up the block decomposition, and \( s^{(i)} \) term-by-term, and see they match as claimed above.
			
			Suppose that both \( s^{(2)} = (\widetilde{t_1},\ldots,\widetilde{t_k}) \) and \( \mathbf{L} = (t_1,\ldots,t_k) \) have been constructed for \( \zeta(s_1,\ldots,s_n) \), and that they agree as claimed.  Then for \( \zeta(s_1,\ldots,s_n,s_{n+1}) \) we obtain by Zhao's inductive definition
			\[
				s^{(1)} = \begin{cases}
					(\widetilde{t_1},\ldots,\widetilde{t_k}) \cdot (1) = (\widetilde{t_1},\ldots,\widetilde{t_k},1) & s_{n+1} = 1 \\
					(\widetilde{t_1},\ldots,\widetilde{t_k}) \oplus (2) = (\widetilde{t_1},\ldots,\widetilde{t_{k-1}},\widetilde{t_k} \oplus 2) & s_{n+1} = 2 \\
					(\widetilde{t_1},\ldots,\widetilde{t_k}) \oplus (\overline{1},\overline{2}) = (\widetilde{t_1},\ldots,\widetilde{t_{k-1}}, \widetilde{t_k} \oplus \overline{1}, \overline{2}) & s_{n+1} = 3
				\end{cases}
			\]
			If \( t_k \) is even, then \( \widetilde{t_k} = \overline{t_k} \), so we have \( \widetilde{t_k} \oplus 2 = \overline{t_k} \oplus 2 = \overline{t_k + 2} = \widetilde{t_k + 2} \), and \( \widetilde{t_k} \oplus 1 = \overline{t_k} \oplus \overline{1} = t_k + 1 = \widetilde{t_k + 1} \).  If \( t_k \) is odd, then \( \widetilde{t_k} = t_k \), so we have \( \widetilde{t_k} \oplus 2 = t_k \oplus 2 = t_k + 2 = \widetilde{t_k + 2} \), and \( \widetilde{t_k} \oplus 1 = t_k \oplus \overline{1} = \overline{t_k + 1} = \widetilde{t_k + 1} \).
			
			So
			\[
			s^{(1)} = \begin{cases}
			 (\widetilde{t_1},\ldots,\widetilde{t_k},1) & s_{n+1} = 1 \\
			(\widetilde{t_1},\ldots,\widetilde{t_{k-1}},\widetilde{t_k+2} & s_{n+1} = 2 \\
			(\widetilde{t_1},\ldots,\widetilde{t_{k-1}}, \widetilde{t_k + 1}, \widetilde{2}) & s_{n+1} = 3
			\end{cases}
			\]
			
			Now consider the corresponding construction on \( \mathbf{L} \).  If \( s_n = 1 \), then the integral word is
			\[
				\cdots 0 \, 1 \, ; \, 1 = \cdots01 \mid 1 \quad \leftrightarrow \quad \mathbf{L} = (\ldots, \ell_{n-1}, 1) \, .
			\]
			We cannot have \( s_{n+1} = 1 \), so we have either
			\begin{align*}
				s_{n+1} = 2 \quad &\rightsquigarrow\quad  \cdots01 \, \underline{10} \, ; \, 1 = \cdots 01 \mid 101 \\
				& \Rightarrow \quad \mathbf{L} = (\ldots, \ell_{n-1}, 3) = (\ldots, \ell_{n-1}, \ell_n + 2) \, .
			\end{align*}
			Or we have
			\begin{align*}
				s_{n+1} = 3 \quad &\rightsquigarrow \quad \cdots 01 \, \underline{100} \, ; \, 1 = \cdots 01 \mid 10 \mid 01 \\
				&\Rightarrow \quad \mathbf{L} = (\ldots, \ell_{n-1}, 2, 2) = (\ldots, \ell_{n-1}, \ell_n + 1, 2) \, .
			\end{align*}
			
			If \( s_n \geq 2 \), then the integral word is
			\[
				\ldots 10 ; 1 = \ldots 101 \quad \leftrightarrow \quad \mathbf{L} = (\ldots, \ell_n)
			\]
			We may have \( s_{n+1} = 1, 2, 3 \), so we obtain
			\begin{align*}
				&&& s_{n+1} = 1 && \rightsquigarrow \quad \ldots 10 \, \underline{1} \, ; \, 1 = 101 \mid 1 &&\quad \Rightarrow \quad \mathbf{L} = (\ldots, \ell_n, 1) &\\
				&&& s_{n+1} = 2 && \rightsquigarrow\quad  \ldots 10 \, \underline{10} \, ; \, 1 = 10101 &&\quad \Rightarrow \quad \mathbf{L} = (\ldots, \ell_n + 2) & \\
				&&& s_{n+1} = 3 && \rightsquigarrow\quad  \ldots 10 \, \underline{100} \, ; \, 1 = 1010 \mid 01 &&\quad \Rightarrow \quad \mathbf{L} = (\ldots, \ell_n + 1, 2) \, . &
			\end{align*}
			
			We see that after inserting the new argument \( s_{n+1} \), we still have matching between \( s^{(1)} \) and \( \mathbf{L} \).
			
			To complete the proof, we must check the base case holds.  If we start with \( \zeta(1) \) then
			\[
				s^{(1)} = (1) \quad \text{ and } \quad \zeta(1) = I(0;1;1) \quad \Rightarrow \quad \mathbf{L} = (2,1) 
			\]
			
			If we start with \( \zeta(s_1>1) \), then
			\begin{align*}
				s^{(1)} = (\{1\}^{s_1 - 2}, \widetilde{2}) \quad &\text{and} \quad \zeta(s_1) = I(0;10^{s_1-1};1) = I(010 \mid (0 \mid)^{s_1-3} \ldots \mid 01) \\
				&\Rightarrow \quad \mathbf{L} = (2+1, \{1\}^{s_1-3},2) \, .
			\end{align*}
			In both cases, \( \mathbf{L} = (2 \circ \ell_1,\ldots,\ell_n) \) matches with \( s^{(1)} \) as claimed, and the lemma is proven.
		\end{proof}
	\end{Lem}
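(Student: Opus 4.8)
The plan is to prove the lemma by induction on the depth of the argument string $\vec{s}$, building $\vec{s}$ together with its iterated-integral word one index at a time, and checking that Zhao's inductively defined $\vec{s}^{(1)}$ remains equal to the tilde of the running block lengths at every stage. Throughout, I would use $\vec{s}^{(1)}$ in the reverse orientation forced by the opposite integral convention of this paper, and fold that reversal into the inductive step.

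Before the induction I would record the combinatorial dictionary that makes the two objects comparable. Argument strings with entries in $\Set{1,2,3}$ and no substring $1,1$ are in bijection with block decompositions all of whose lengths exceed $1$, via the word $0\,10^{s_1-1}\cdots 10^{s_d-1}\,1$ with its bounds. Appending a new index $s_{n+1}\in\Set{1,2,3}$ inserts the letters $1$, $10$, or $100$ just before the closing bound, and I would then re-deconcatenate at the newly created repeated letter to read off the updated block lengths. The opening bound $0$ contributes the fixed leading $2$ in the first block $2\circ\ell_1$; this is pinned down once and for all by the base case, after which $\vec{s}^{(1)}$ is to track the tildes $\widetilde{\ell_1},\ldots,\widetilde{\ell_n}$ of the remaining block lengths. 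A point to watch is that a pending index $s_n=1$ leaves a transient trailing block of length $1$ (which forbids $s_{n+1}=1$), so the induction must range over all intermediate strings, not merely those meeting the final hypothesis $\ell_i>1$.

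The core is to run the two recursions side by side. Zhao's rule sends the last entry $\widetilde{t_k}$ of $\vec{s}^{(1)}$ to $\widetilde{t_k}\oplus 2$ when $s_{n+1}=2$, to $\widetilde{t_k}\oplus\overline{1}$ followed by a new entry $\overline{2}$ when $s_{n+1}=3$, and simply appends a new entry $1$ when $s_{n+1}=1$, where $\oplus$ adds values and multiplies signs. On the block side the same three insertions respectively append a length-$1$ block, lengthen the last block by $2$, or lengthen it by $1$ and append a length-$2$ block. The decisive computation is the two-line parity check that $\widetilde{t_k}\oplus 2=\widetilde{t_k+2}$ and $\widetilde{t_k}\oplus\overline{1}=\widetilde{t_k+1}$: because the bar in $\widetilde{\ell}$ records exactly the parity of $\ell$, the sign multiplication in $\oplus$ flips precisely when the block length changes parity, so the two recursions never drift apart. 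I would carry this out separately in the cases $s_n=1$ (trailing block of length $1$) and $s_n\geq 2$ (word ending $\ldots101$), matching against Zhao's formula in each.

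Finally I would verify the base cases directly. Starting from $\zeta(1)$, Zhao gives $\vec{s}^{(1)}=(1)$ while $I(0;1;1)$ has block decomposition $(2,1)$; starting from $\zeta(s_1>1)$, Zhao gives $\vec{s}^{(1)}=(\{1\}^{s_1-2},\widetilde{2})$ while $I(0;10^{s_1-1};1)$ has block decomposition $(2{+}1,\{1\}^{s_1-3},2)$, and in each case one reads off directly that $\vec{L}=(2\circ\ell_1,\ldots,\ell_n)$ agrees with $(\widetilde{\ell_1},\ldots,\widetilde{\ell_n})$ once the leading $2$ is attributed to the opening bound. The step I expect to be the main obstacle is the block recursion itself: correctly predicting where the re-deconcatenation falls after each insertion, and confirming that the resulting parity change of the trailing block is mirrored by the sign flip in Zhao's $\oplus$. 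Once this parity-to-sign dictionary is in place, the induction closes with only routine checking.
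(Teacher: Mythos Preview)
Your proposal is correct and follows essentially the same approach as the paper: an induction on the depth of $\vec{s}$, appending one index $s_{n+1}\in\{1,2,3\}$ at a time, running Zhao's recursion and the block-length recursion in parallel, splitting on whether $s_n=1$ or $s_n\geq 2$, and closing the step with the parity identity $\widetilde{t_k}\oplus 2=\widetilde{t_k+2}$, $\widetilde{t_k}\oplus\overline{1}=\widetilde{t_k+1}$; the base cases $\zeta(1)$ and $\zeta(s_1>1)$ are also handled identically. Your remark that the induction must tolerate a transient trailing block of length $1$ is a point the paper leaves implicit, but otherwise the two arguments coincide.
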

	
	\begin{Lem}[Zhao]\label{lem:zhao}
		For any arguments \( \vec{s} \), we have
		\[
			\zeta^\star(\vec{s}) = \epsilon(\vec{s}) \sum_{\vec{p} \in \Pi(\vec{s}^{(1)})} 2^{\#\vec{p}} \zeta(\vec{p}) \, ,
		\]		
		where \( \Pi(s_1,\ldots,s_\ell) \) is the set of all indices of the form \( (s_1 \circ \cdots \circ s_\ell) \), where \( \circ \) is either ``\,,\,'' or ``\,$\oplus$\,'', and \( \epsilon(\vec{s}) = 1 \) if \( s_1 = 1 \), \( \epsilon(\vec{s}) = -1 \) if \( s_1 \geq 2 \).
		
		\begin{proof}
			Apply Theorem 1.4 of Zhao \cite{zhao2016identity}, and pass to the limit \( n \to \infty \) using Lemma 4.5 of Zhao \cite{zhao2016identity}.
		\end{proof}
	\end{Lem}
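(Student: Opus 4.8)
The statement is, up to conventions, Zhao's generalised 2-1 formula, so the plan is not to reprove that formula from scratch but to deduce it from \cite{zhao2016identity} together with a limiting argument. Zhao proves the identity at \emph{finite level}: working with the truncated multiple harmonic star sum $\zeta^\star_N(\vec s) = \sum_{0 < n_1 \le \cdots \le n_r \le N} n_1^{-s_1}\cdots n_r^{-s_r}$ and the corresponding truncated (alternating) multiple harmonic sums $\zeta_N(\vec p)$, his Theorem 1.4 expresses $\zeta^\star_N(\vec s)$ as a signed, weighted sum over exactly the index set built from the inductively defined string $\vec s^{(1)}$, with each $\vec p$ weighted by $2^{\#\vec p}$ and an overall sign. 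The two tasks are therefore to transcribe this finite identity into the present notation and to send $N \to \infty$.

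First I would reconcile the two conventions. As flagged in the Warning above, Zhao reads his iterated-integral words, and hence his arguments, in the opposite direction; translating to the present setup reverses the string $\vec s$ and correspondingly reverses the associated string $\vec s^{(1)}$. The key observation is that reversal respects the combinatorial data generating $\Pi$: choosing ``\,,\,'' or ``\,$\oplus$\,'' independently at each gap of $\vec s^{(1)}$ is unchanged by reading the string backwards, so reversal carries Zhao's index set bijectively onto $\Pi(\vec s^{(1)})$ as defined here, and preserves the multiplicity $2^{\#\vec p}$ (a count of parts). It remains to match the sign $\epsilon(\vec s)$, which by definition depends only on whether the first argument is $1$ or $\ge 2$ in this paper's convention, against Zhao's sign at the opposite endpoint. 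This step is bookkeeping, but it is where the bar-signs (the entries $\overline{k}$ of $\vec s^{(1)}$) must be tracked carefully so that each alternating MZV $\zeta(\vec p)$ matches term-by-term after reversal.

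Next I would pass to the limit. The left-hand side is $\zeta^\star(\vec s) = \lim_{N \to \infty} \zeta^\star_N(\vec s)$, so it suffices to show that the finite right-hand side converges to $\epsilon(\vec s)\sum_{\vec p \in \Pi(\vec s^{(1)})} 2^{\#\vec p}\zeta(\vec p)$. This is exactly the content of Lemma 4.5 of \cite{zhao2016identity}, which controls the limits of the truncated alternating sums attached to the indices in $\Pi(\vec s^{(1)})$; invoking it lets one replace the whole combination $\sum 2^{\#\vec p}\zeta_N(\vec p)$ by its limit and conclude.

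The main obstacle is the limiting step rather than the algebra. The truncated sums $\zeta_N(\vec p)$ are finite, but their individual limits as $N \to \infty$ need not exist: some indices $\vec p \in \Pi(\vec s^{(1)})$ need not be admissible alternating arguments on their own (for instance those with an extremal argument equal to $1$), so the corresponding $\zeta_N(\vec p)$ can diverge, with the logarithmic divergences cancelling only across the full sum. I would therefore take the limit of the entire right-hand side at once, relying on Lemma 4.5 of \cite{zhao2016identity} to guarantee that the combination converges and equals $\sum_{\vec p \in \Pi(\vec s^{(1)})} 2^{\#\vec p}\zeta(\vec p)$, rather than sending each term to its own limit or regularising term by term.
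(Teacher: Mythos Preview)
Your proposal is correct and follows exactly the paper's approach: invoke Zhao's Theorem~1.4 at finite level and then pass to the limit via his Lemma~4.5, with the bookkeeping for the reversed convention handled as in the Warning. The extra discussion you give about non-admissible indices and taking the limit of the whole combination at once is precisely the content of Lemma~4.5, so nothing is missing.
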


	\begin{Prop}
		Let \( \vec{s} = (s_1, \ldots, s_{n}) \) be given, and assume \( S_{n} \) acts on the indices \( 1, \ldots, n \) in the standard way.  Then
		\[
			\sum_{\sigma \in S_{n}} \sum_{\vec{p} \in \Pi(\sigma \cdot \vec{s})} 2^{\# \vec{p}} \zeta(\vec{p}) = \sum_{\vec{q} \in \Part(n)} 2^{\# \vec{q}} \prod_i (\# q_i)! \zeta_\sym\bigg(\bigoplus\limits_{j \in q_1} s_j, \ldots, \bigoplus\limits_{j \in q_t} s_j \bigg) \, ,
		\]
0		where \( \zeta_\sym(a_1,\ldots, a_t) \coloneqq \sum_{\tau \in S_td} \zeta(a_{\tau(1)}, \ldots, a_{\tau(t)}) \) symmetrises the arguments of the MZV. \medskip
		
		That is, one can move the \( S_{n} \) action from \( \mathbf{L} \) to the arguments of the zeta, at the expense of some coefficients.
		
		\begin{proof}
			Firstly, observe that \( p \in \Pi(\sigma \cdot \vec{s}) \) means \( p = s_{\sigma(1)} \circ \cdots \circ s_{\sigma(n)} \), where each \( \circ \) is \( , \) or \( \oplus \).  But this is equivalent to \( p = \sigma \cdot (s_1 \circ \cdots \circ s_n) \), under the induced \( S_n \) action, and \( (s_1 \circ \cdots \circ s_n) \in \Pi(s_1,\ldots,s_n) \).  So we can write
			\[
				= \sum_{\sigma \in S_n} \sum_{p \in \Pi(\vec{s})} 2^{\# \vec{p}} \zeta(\sigma \cdot  \vec{p}) \, .
			\]
			Warning: \( \sigma \) acts on the elements \( s_i \) inside \( \vec{p} = (s_1 \circ \cdots \circ s_n) \), and not on the comma separated blocks.  So \( \zeta(\sigma \cdot \vec{p}) \) is not simply \( \zeta_\sym(\vec{p}) \).  We need to do further manipulation to obtain the desired form. \medskip
			
			An element \( \vec{p}\in \Pi(\vec{s}) \) is of the form \( s_1 \circ \cdots \circ s_n \) for some choices \( \circ = \mathop{,} \text{ or } \oplus \).  That is
			\begin{align*}
				\vec{p} &= ( s_1 \oplus \cdots \oplus s_{i_1} , s_{i_1+1} \oplus \cdots \oplus s_{i_2} , \ldots,
				s_{i_{t-1}+1} \oplus \cdots \oplus s_{\underbrace{i_t}_{=n}} ) \text{ , and }\\
				\sigma \cdot \vec{p} &= ( s_{\sigma(1)} \oplus \cdots \oplus s_{\sigma(i_1)} , s_{\sigma(i_1+1)} \oplus \cdots \oplus s_{\sigma(i_2)} , \ldots,
				s_{\sigma(i_{t-1}+1)} \oplus \cdots \oplus s_{\underbrace{\sigma(i_t)}_{i_t=n}} ) \, .
			\end{align*}
			
			We can therefore define a surjective map
			\begin{align*}
				\phi \colon (\Pi(\vec{p}) , S_n) & \to \Part^\ast(n) \\
				(\vec{p}, \sigma) & \mapsto [\Set{\sigma(1),\ldots,\sigma(i_1)}, \Set{\sigma(i_1 + 1), \ldots, \sigma(i_2)}, \ldots, \Set{\sigma(i_{t-1}+1),\ldots,\sigma(i_t)}]
			\end{align*}
			where \( i_1,\ldots,i_t \) are given by the expression for \( \sigma \cdot \vec{p} \) above.  Here \( \Part^\ast(n) \) is the set-partitions of \( \{ 1, \ldots, n \} \), where the order of the elements of the parts is not important, but the order of the parts themselves is.  That is \( [\Set{a,b}, \Set{c}] = [\Set{b,a},\Set{c}] \), but these are different from \( [\Set{c}, \Set{a,b}] \).
			
			If \( q = \phi(\vec{p}, \sigma) = [q_1,\ldots,q_t] \), then \( \zeta(\sigma \cdot \vec{p}) = \zeta(\bigoplus_{j \in q_1} s_j, \ldots, \bigoplus_{j \in q_t} s_j) \), and \( \# \vec{q} = \# \vec{p} \).  Notice that \( \# \phi^{-1}(\vec{q}) = \#q_1! \cdots \#q_t! \), since any permutation which respects the parts of \( \vec{p}\) maps to the same \( \vec{q} \).  So we can write that the desired sum is
			\[
				= \sum_{\vec{q} \in \Part^\ast(n)} 2^{\# q} \left( \prod_i \#q_i! \right) \zeta\bigg(\bigoplus_{j \in q_1} s_j, \ldots, \bigoplus_{j \in q_t} s_j\bigg) \, .
			\]
			
			Finally, we have a surjective map
			\begin{align*}
				\psi \colon \Part^\ast(n) &\to \Part(n) \\
					[q_1,\ldots,q_t] &\mapsto \Set{q_1,\ldots,q_t} \, ,
			\end{align*}
			with \( \psi^{-1}(\Set{q_1,\ldots,q_t})  = \Set{ [q_{\sigma(1)},\ldots,q_{\sigma(t)}] | \sigma \in S_t } \).
			
			So the sum can be written
			\[
				= \sum_{\vec{q} \in \Part(n)} 2^{\# q} \#q_1! \cdots \#q_t! \underbrace{\sum_{\sigma \in S_t} \zeta\bigg(\bigoplus_{j \in q_{\sigma(1)}} s_j, \ldots, \bigoplus_{j \in q_{\sigma(t)}} s_j\bigg)}_{\eqqcolon \zeta_\sym} \, .
			\]
			This completes the proof.
		\end{proof}
		\end{Prop}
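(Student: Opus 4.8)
The plan is to move the $S_n$ action across the identity in two stages, passing through two intermediate families of combinatorial objects: first ordered set partitions of $\{1,\ldots,n\}$ whose parts carry an internal order (this is the data encoding a pair $(\vec p,\sigma)$), then ordered set partitions with unordered parts (call this $\Part^\ast(n)$), and finally the unordered set partitions $\Part(n)$ appearing on the right-hand side.

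First I would pull $\sigma$ off the argument of $\Pi$ and onto the index inside $\zeta$. An element $\vec p \in \Pi(\sigma \cdot \vec s)$ is, by definition, of the form $s_{\sigma(1)} \circ \cdots \circ s_{\sigma(n)}$ with each $\circ$ a comma or an $\oplus$; for a fixed choice of the $\circ$'s this is precisely $\sigma$ acting --- letter by letter, not block by block --- on the index $(s_1 \circ \cdots \circ s_n) \in \Pi(\vec s)$. This gives a $\#\vec p$-preserving bijection between the two sums, so
\[
\sum_{\sigma \in S_n} \sum_{\vec p \in \Pi(\sigma \cdot \vec s)} 2^{\#\vec p}\,\zeta(\vec p) = \sum_{\sigma \in S_n} \sum_{\vec p \in \Pi(\vec s)} 2^{\#\vec p}\,\zeta(\sigma \cdot \vec p) \, .
\]
The point to stress is that here $\sigma$ permutes the individual entries $s_i$ of $\vec p$, so $\zeta(\sigma\cdot\vec p)$ is \emph{not} the symmetrisation $\zeta_\sym(\vec p)$; disentangling this is the content of the rest of the argument.

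Next I would parametrise the pairs $(\vec p,\sigma)$. An index $\vec p \in \Pi(\vec s)$ records where its commas sit, hence corresponds to the partition of $\{1,\ldots,n\}$ into consecutive intervals $\{1,\ldots,i_1\},\{i_1+1,\ldots,i_2\},\ldots,\{i_{t-1}+1,\ldots,n\}$; applying $\sigma$ sends this to the ordered list of (no longer necessarily consecutive) sets $[\{\sigma(1),\ldots,\sigma(i_1)\},\ldots,\{\sigma(i_{t-1}+1),\ldots,\sigma(n)\}]$, defining a surjection $\phi$ onto $\Part^\ast(n)$. Two facts make the computation run: (i) the summand depends only on $\vec q = \phi(\vec p,\sigma) = [q_1,\ldots,q_t]$, with $\zeta(\sigma\cdot\vec p) = \zeta(\bigoplus_{j\in q_1}s_j,\ldots,\bigoplus_{j\in q_t}s_j)$ and $\#\vec q = \#\vec p$; and (ii) the fibre $\phi^{-1}(\vec q)$ has exactly $\prod_i \#q_i!$ elements, since the shape of $\vec p$ is forced and the permutations carrying the $k$-th interval onto $q_k$ number $\prod_i \#q_i!$. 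Summing over fibres of $\phi$ rewrites the sum as $\sum_{\vec q \in \Part^\ast(n)} 2^{\#\vec q}\big(\prod_i \#q_i!\big)\zeta(\bigoplus_{j\in q_1}s_j,\ldots,\bigoplus_{j\in q_t}s_j)$.

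Finally I would forget the order of the parts via the surjection $\psi\colon \Part^\ast(n)\to\Part(n)$: since the parts of a set partition are distinct as sets, the fibre over $\{q_1,\ldots,q_t\}$ consists of all $t!$ reorderings, and the coefficient $2^{\#\vec q}\prod_i\#q_i!$ is constant on this fibre; collecting the $t!$ reorderings converts $\zeta(\bigoplus_{j\in q_1}s_j,\ldots)$ into $\zeta_\sym(\bigoplus_{j\in q_1}s_j,\ldots,\bigoplus_{j\in q_t}s_j)$, which is exactly the stated right-hand side. I expect the only real difficulty to be the combinatorial bookkeeping in the two middle steps: keeping the three flavours of partition straight (ordered parts with ordered elements, i.e.\ the $\Pi$-data; ordered parts with unordered elements, $\Part^\ast$; fully unordered, $\Part$), and checking that the two fibre-size counts, $\prod_i\#q_i!$ for $\phi$ and $t!$ for $\psi$, are precisely what is needed for the factorial coefficients to appear as claimed.
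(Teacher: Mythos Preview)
Your proposal is correct and follows essentially the same route as the paper's own proof: pull $\sigma$ inside to get $\sum_{\sigma}\sum_{\vec p\in\Pi(\vec s)}2^{\#\vec p}\zeta(\sigma\cdot\vec p)$, pass through the surjection $\phi$ to ordered set partitions $\Part^\ast(n)$ with fibre size $\prod_i \#q_i!$, and then through the surjection $\psi$ to $\Part(n)$ to assemble the $\zeta_\sym$. Even the warning that $\sigma$ acts on the individual $s_i$ rather than on the comma-separated blocks, and the explicit fibre counts, match the paper's argument step for step.
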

	
		Using the symmetric sum formula \cite{hoffman1992multiple} (or rather Zhao's generalisation to alternating MZV's in Lemma 5.1 of \cite{zhao2016identity}), one can evaluate the RHS above.  We obtain the following
	
		\begin{Prop}\label{prop:oddpartsum}
			The following evaluation holds
			\[
				\sum_{\vec{q} \in \Part(n)} 2^{\# q} \bigg( \prod_{i=1}^{\#\vec{q}} \#q_i! \bigg) \; \zeta_\sym\bigg(\bigoplus_{j \in q_{1}} s_j, \ldots, \bigoplus_{j \in q_{t}} s_j\bigg) = \sum_{\vec{r} \in \Part_{\text{odd}}(n)} 2^{\# \vec{r}} \prod_{i=1}^{\#\vec{r}} (\# r_i - 1)! \prod_{i=1}^{\#\vec{r}} \zeta\bigg(\bigoplus_{j \in r_i} s_j\bigg)
			\]
		\end{Prop}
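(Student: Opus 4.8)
The plan is to expand each symmetrised MZV on the left-hand side by the symmetric sum formula, and then to reorganise the resulting double sum over set partitions around a coarsening. First I would apply Hoffman's symmetric sum formula \cite{hoffman1992multiple}, in Zhao's alternating generalisation (Lemma~5.1 of \cite{zhao2016identity}): for a fixed \( \vec q=\{q_1,\ldots,q_t\}\in\Part(n) \),
\[
\zeta_\sym\bigg(\bigoplus_{j\in q_1}s_j,\ldots,\bigoplus_{j\in q_t}s_j\bigg)
= \sum_{\rho}\;\prod_{B\in\rho}(-1)^{\#B-1}(\#B-1)!\;\prod_{B\in\rho}\zeta\bigg(\bigoplus_{i\in B}\;\bigoplus_{j\in q_i}s_j\bigg)\,,
\]
where \( \rho \) runs over set partitions of the index set \( \{1,\ldots,t\} \) of the parts of \( \vec q \). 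No regularisation issue arises, because in the situation of \autoref{thm:mzsvcyc} every entry of \( \vec s \) has absolute value \( \ell_i>1 \), so each inner sum \( \bigoplus_{i\in B}\bigoplus_{j\in q_i}s_j \) has weight at least \( 2 \). Substituting this expansion into the left-hand side turns the sum over \( \vec q\in\Part(n) \) into a sum over pairs \( (\vec q,\rho) \).

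Next I would rewrite such a pair as the equivalent data of a coarser partition \( \vec r\in\Part(n) \), namely \( \vec r=\{\,\bigcup_{i\in B}q_i : B\in\rho\,\} \), together with, for each block \( r\in\vec r \), the set partition \( \vec q^{(r)} \) of \( r \) consisting of those parts of \( \vec q \) that lie inside \( r \) (this set of parts is exactly one block of \( \rho \)). This gives a bijection between pairs \( (\vec q,\rho) \) and such nested data, and under it each of the weights \( 2^{\#\vec q} \), \( \prod_i\#q_i! \), \( \prod_{B\in\rho}(-1)^{\#B-1}(\#B-1)! \), and \( \prod_{B\in\rho}\zeta(\cdots) \) factorises as a product over the blocks \( r\in\vec r \), since \( \bigoplus_{i\in B}\bigoplus_{j\in q_i}s_j=\bigoplus_{j\in r}s_j \) for the block \( B \) of \( \rho \) indexing \( \vec q^{(r)} \). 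Grouping the factors attached to each \( r \), the left-hand side becomes
\[
\sum_{\vec r\in\Part(n)}\;\prod_{r\in\vec r} f(\#r)\,\zeta\bigg(\bigoplus_{j\in r}s_j\bigg)\,,
\qquad
f(m)\coloneqq \sum_{\pi} 2^{\#\pi}(-1)^{\#\pi-1}(\#\pi-1)!\prod_{P\in\pi}(\#P)!\,,
\]
the inner sum being over all set partitions \( \pi \) of \( \{1,\ldots,m\} \).

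Finally I would evaluate \( f(m) \) by the exponential formula. Weighting each block of size \( j \) by \( j! \) has exponential generating function \( \sum_{j\geq 1}j!\,x^j/j!=x/(1-x) \), and weighting a partition with \( k \) blocks by \( 2^k(-1)^{k-1}(k-1)! \) gives
\[
\sum_{m\geq 1} f(m)\,\frac{x^m}{m!}
= \sum_{k\geq 1}\frac{(-1)^{k-1}}{k}\Big(\frac{2x}{1-x}\Big)^{k}
= \log\!\Big(1+\frac{2x}{1-x}\Big)
= \log\frac{1+x}{1-x}\,;
\]
since \( \log\frac{1+x}{1-x}=2\sum_{m\geq 1,\ m\text{ odd}}x^m/m \), we get \( f(m)=2(m-1)! \) for \( m \) odd and \( f(m)=0 \) for \( m \) even. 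Hence only partitions \( \vec r \) into odd-sized parts survive, and each contributes \( \prod_{r\in\vec r}2(\#r-1)!\,\zeta(\bigoplus_{j\in r}s_j)=2^{\#\vec r}\prod_i(\#r_i-1)!\prod_i\zeta(\bigoplus_{j\in r_i}s_j) \), which is the right-hand side. I expect the step requiring the most care to be the combinatorial bookkeeping of the middle paragraph — checking that \( (\vec q,\rho) \) biject with nested partitions and that every weight factor really decouples over the blocks of \( \vec r \) — whereas invoking the symmetric sum formula and the generating-function computation of \( f(m) \) are short.
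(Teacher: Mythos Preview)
Your proof is correct and follows the same strategy as the paper: apply the symmetric sum formula, regroup the double sum over pairs \((\vec q,\rho)\) according to the coarsened partition \(\vec r\), factorise the coefficient over the blocks of \(\vec r\), and then evaluate the resulting single-block function. The only difference is in that last evaluation: you compute \(f(m)\) cleanly via the compositional/exponential formula and the generating function \(\log\frac{1+x}{1-x}\), whereas the paper works with the equivalent quantity \(g(m)=-f(m)\), proves the polynomial identity \(\tfrac{1}{n!}g(n,x)=-\tfrac{1}{n}+\tfrac{1}{n}(1+x)^n\) by differentiating and a bijective count of ordered partitions, and then specialises to \(x=-2\); your route is shorter but the content is the same.
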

	
		\begin{proof}
			Firstly, we must apply the symmetric sum theorem to evaluate the left hand side.  It gives
			\[
				= \sum_{\vec{q}\in \Part(n)} 2^{\#\vec{q}} \bigg( \prod_{i=1}^{\#\vec{q}} \# q_i! \bigg) \sum_{\vec{t} \in \Part(\#\vec{q})} (-1)^{\#\vec{q} - \#\vec{t}} \bigg( \prod_{j = 1}^{\#\vec{t}} (\# t_j - 1)! \bigg) \prod_{j=1}^{\#\vec{t}} \zeta\bigg( \bigoplus_{\alpha \in t_j} \bigoplus_{\beta \in q_\alpha} s_\beta \bigg) \, .
			\]
			As the parts \( q_\alpha \) are disjoint, the \( \zeta \) argument
			\[
				\bigoplus_{\alpha \in t_j} \bigoplus_{\beta \in q_\alpha} s_\beta
			\]
			can be written as
			\[
				\bigoplus_{\alpha \in r_j} s_\alpha \, .
			\]
			for some partition \( \vec{j} \in \Part(n) \).  This partition is obtained from \( (\vec{q}, \vec{t}) \) by `flattening' in the following sense
			\begin{align*}
				 f(\vec{q},\vec{t}) = & \vec{r} = \{ r_1,\ldots,r_{\#\vec{t}} \} \text{, where} \\
				& r_i = \bigcup_{j \in t_i} q_j \, .
			\end{align*}
			For example
			\begin{align*}
				& (\vec{q} = \Set{ q_1 = \Set{1, 2, 4} , q_2 = \Set{3, 5}, q_3 = \Set{6,8}, q_4 = \Set{7} }, \vec{t} = \Set{ \Set{1,3,4}, \Set{2} }) \\
				& \mapsto \Set{ q_1 \cup q_3 \cup q_4 , q_2 } = \Set{ \Set{1,2,4,6,7,8}, \Set{3,5} } \, .
			\end{align*}
			
			So we may formally write the sum as
			\[
				\sum_{\vec{r} \in \Part(n)} \sum_{(\vec{q},\vec{t}) \in f^{-1}(\vec{r})} 2^{\#\vec{q}} (-1)^{\#\vec{q} - \#\vec{t}} \bigg( \prod_{i=1}^{\#\vec{q}} \# q_i! \bigg) \bigg( \prod_{j = 1}^{\#\vec{t}} (\# t_j - 1)! \bigg) \prod_{k=1}^{\#\vec{r}} \zeta\bigg( \bigoplus_{\alpha \in r_k} s_\alpha \bigg) \, .
			\]
			We thus need to evaluate the coefficient
			\[
				c_\vec{r} \coloneqq \sum_{(\vec{q},\vec{t}) \in f^{-1}(\vec{r})} 2^{\#\vec{q}} (-1)^{\#\vec{q} - \#\vec{t}} \bigg( \prod_{i=1}^{\#\vec{q}} \# q_i! \bigg) \bigg( \prod_{j = 1}^{\#\vec{t}} (\# t_j - 1)! \bigg) \, .
			\] We want to show two things: firstly that if the partition \( \vec{r}\) has any even size parts, then the coefficient is 0.  Secondly if the partition only has odd size parts, the coefficient is as indicated in the statement of the proposition.
			
			Firstly, we can describe \( f^{-1}(\vec{r}) \) more explicitly, as follows.  The elements \( \vec{q} \) which flatten to \( \vec{r} \) are obtained as \( \vec{q} = \coprod_{i=j}^{\#\vec{r}} \vec{T}_j \), where \( \vec{T}_j \) is any partition of \( r_j \).  This choice of partitions \( \vec{T}_1,\ldots,\vec{T}_{\#\vec{r}} \) determines \( \vec{t} \), since \( r_j = \bigcup \vec{T}_j \).  For example
			\begin{align*}
				& \vec{r} = \Set{ \Set{1, 3, 4, 5, 7} , \Set{ 2, 6, 8} } \\
				& \xrightarrow{\text{\( f^{-1} \) contains}} \{ \underbrace{\Set{ 1,4 \mid 3 \mid 5,7 }}_{\vec{T}_1} , \underbrace{\Set{ 2,6 \mid 8 }}_{\vec{T}_2} \} = \Set{\Set{1,4}, \Set{2,6}, \Set{3}, \Set{5,7}, \Set{8}}
			\end{align*}
			and \( \vec{t} = \Set{\Set{1,3,4}, \Set{2,5} } \).
			
			Under this construction we have
			\begin{gather*}
				\#\vec{t} = \#\vec{r} \, , \quad \quad \# t_i = \# \vec{T_i} \, , \quad \quad \#\vec{q} = \sum_{i=1}^{\#\vec{r}} \# \vec{T}_i \\
				q_j = T_{k,l} \text{ , some \( k, l \), so that} \\
				\prod_{j=1}^{\#\vec{q}} \#q_j! = \prod_{k=1}^{\#\vec{r}} \prod_{l=1}^{\#\vec{T}_k} \#T_{k,l}!
			\end{gather*}
			Thus
			\[
					c_\vec{r} = (-1)^{\#\vec{r}}\sum_{T_1 \in \Part(\#r_1)} \cdots \sum_{T_{\#\vec{r}} \in \Part(\#r_{\#\vec{r}})} (-2)^{\sum_{i} \# T_i} \cdot \prod_{k=1}^{\#\vec{r}} \prod_{l=1}^{\#\vec{T}_k} \#T_{k,l}! \cdot \prod_{j = 1}^{\#\vec{r}} (\# T_j - 1)! \, .
			\]
			This sum can now be factored into a product of the following form
			\[
				c_\vec{r}= (-1)^{\#\vec{r}} \prod_{i=1}^{\#\vec{r}} g(\#r_i) \, ,
			\]
			where
			\[
				g(i) \coloneqq \sum_{\vec{w} \in \Part(i)} (-2)^{\#\vec{w}} \prod_{l=1}^{\#\vec{w}} \#w_l! \cdot (\#\vec{w} - 1)! \, .
			\]
			
			I claim that \( g \) can be evaluated as follows
			\[
				g(i) = \begin{cases} -2 (i-1)! & \text{ \( i \) odd } \\
					0 & \text{ \( i \) even.}
					\end{cases}
			\]
			If this claim does hold, then
			\begin{align*}
				c_\vec{r} &= \begin{cases}
					0 & \text{some \( \#r_i \) even} \\
					(-1)^{\#\vec{r}} \prod_{i=1}^{\#\vec{r}} (-2) (\#r_i-1)! & \text{all \( \#r_i \) odd}
				\end{cases} \\
				&= \begin{cases}
					0 & \text{some \( \#r_i \) even} \\
					2^{\#\vec{r}} \prod_{i=1}^{\#\vec{r}} (\#r_i-1)! & \text{all \( \#r_i \) odd.}
				\end{cases}
			\end{align*}
			So the proposition will follow.
		\end{proof}
	
		For the proof to be complete, we need to show the following claim.
	
		\begin{Clm}
			Let
			\[
				g(n) \coloneqq \sum_{\vec{w} \in \Part(n)} (-2)^{\#\vec{w}} \prod_{l=1}^{\#\vec{w}} \#w_l! \cdot (\#\vec{w} - 1)! \, ,
			\]
			then
			\[
				g(n) = \begin{cases} -2 (n-1)! & \text{ \( n \) odd } \\
				0 & \text{ \( n \) even.}
				\end{cases}
			\]
			
			\begin{proof}
				We show the generalised identity
				\[
					\frac{1}{n!} g(n, x) = -\frac{1}{n} + \frac{1}{n} (1 + x)^n \, ,
				\]
				where
				\[
					g(n,x) \coloneqq \sum_{\vec{w} \in \Part(n)} x^{\#\vec{w}} \prod_{l=1}^{\#\vec{w}} \#w_l! \cdot (\#\vec{w} - 1)!
				\]	
				Hence for \( x = -2 \), we obtain
				\[
					g(n,-2) = -(n-1)! + (n-1)! (-1)^n = \begin{cases}
					-2(n-1)! & \text{\( n \) odd} \\
						0 & \text{\( n \) even,}
					\end{cases}
				\]
				as claimed. \medskip
				
				To show the generalised identity, we can first show that the derivatives agree.  Then integrating gives
				\[
					\frac{1}{n!} g(n,x) = c + \frac{1}{n} (1 + x)^n \, ,
				\]
				for some constant \( c \).  One sees that \( c = -\frac{1}{n} \) by setting \( x = 0 \); the left hand side is 0 and the right hand side is \( c + \frac{1}{n} \), which proves the claim. \medskip
				
				To see the derivatives agree, we must show
				\begin{align}
 					& \frac{1}{n!} g'(n,x) = (1 + x)^{n-1} \, \text{ equivalently, } \nonumber \\
					& g'(n,x) = n! (1 + x)^{n-1} \label{eqn:gder} \, .
				\end{align}
				Term-by-term differentiation of \( g(n,x) \) gives
				\[
					g'(n,x) = \sum_{\vec{w} \in \Part(n)} x^{\#\vec{w}-1} \prod_{l=1}^{\#\vec{w}} \#w_l! \cdot \#\vec{w}! \, .
				\]
				The coefficient of \( x^{i-1} \) on the right hand side of \autoref{eqn:gder} is
				\[
					n! \binom{n-1}{i-1} \, .
				\]
				The coefficient of \( x^{i-1} \) on the left hand side is
				\[
					\sum_{\substack{\vec{w} \in \Part(n) \\
						\#\vec{w} = i}} \prod_{l=1}^{\#\vec{w}} w_l! \cdot \underbrace{\#\vec{w}!}_{i!}
				\]
				These two expressions give two different ways to count the number of ordered partitions of \( [1,\ldots,n] \) into \( i \) non-empty ordered parts, and hence are equal.  Here an ordered partition with ordered parts means that \( [[1,2],[3]] \), \( [[2,1],[3]] \), \([[3],[1,2]] \) and \( [[3],[2,1]] \) are all counted as distinct.  For simplicity, refer to such a partition as an ordered/ordered partition. \medskip
				
				We can form an ordered/ordered partition of \( [1,\ldots,n] \) into \( i \) parts by first taking any permutation of \( [1,\ldots,n] \), then inserting \( i-1 \) bars into any choice of the \( n-1 \) gaps, breaking the \( i \) non-empty parts.
				\begin{align*}
					[1,\ldots,8] & {} \xrightarrow[\phantom{\text{insert bars}}]{\text{permute}} [4\underset{\uparrow}{,} 5, 2, 3\underset{\uparrow}{,} 7, 6\underset{\uparrow}{,} 1, 8] \\
					& {} \xrightarrow{\text{insert bars}} [[4],[5,2,3],[7,6],[1,8]] \, .
				\end{align*}
				
				There are \( n! \) permutations, and \( \binom{n-1}{i-1} \) ways of choosing \( i - 1 \) positions from the \( n - 1 \) gaps.  This gives the right hand side.
				
				Alternatively, we can form an ordered/ordered partition of \( [1,\ldots,n] \) by taking a set-partition of \( \Set{1,\ldots,n} \) into \( i \) parts, then reordering the \( i \) parts arbitrarily, as well as arbitrarily reordering the elements of each part.  Every such ordered/ordered partition of \( [1,\ldots,n] \) arises in this way, for some unique \( \vec{w} \), as forgetting about both orderings gives a surjection onto \( \Part(n) \).  
				\begin{align*}
					\vec{w} \in \Part(n) & \xrightarrow[\phantom{\text{permute parts}}]{} \Set{\Set{1,8},\Set{2,3,5},\Set{4},\Set{6,7}} \\
					& \xrightarrow[\text{and elements}]{\text{permute parts}} [[4],[5,2,3],[7,6],[1,8]]
				\end{align*}
				Let \( \vec{w} \in \Part(n) \) be a (set-)partition of \( \Set{1,\ldots,n} \) into \( i \) parts, with sizes of each part \( \#w_1, \ldots, \#w_i \) respectively.  Then there are \( i! \prod_{l=1}^{\#\vec{w}} \#w_l! \) such ordered/ordered partitions arising from \( \vec{w} \).  We must sum over all such \( \vec{w} \in \Part(n) \), giving
				\[
					\sum_{\substack{\vec{w} \in \Part(n) \\ \#\vec{w} = i}}  i! \prod_{l=1}^{\#\vec{w}} \#w_1! \, .
				\]
				This is the left hand side. \medskip
				
				The coefficients of both sides agree, hence we get the required equality of derivatives, and so claim follows.
			\end{proof}
		\end{Clm}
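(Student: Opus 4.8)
The plan is to prove the stronger one-parameter identity
\[
\frac{1}{n!}\, g(n,x) = \frac{(1+x)^n - 1}{n} \, ,
\]
where \( g(n,x) \coloneqq \sum_{\vec{w} \in \Part(n)} x^{\#\vec{w}} \prod_{l=1}^{\#\vec{w}} \#w_l! \cdot (\#\vec{w}-1)! \) is the natural refinement of \( g(n) \) in which the exponent of \( x \) records the number of parts, so that \( g(n) = g(n,-2) \).  Granting this identity, substituting \( x = -2 \) immediately gives \( g(n) = (n-1)!\big((-1)^n - 1\big) \), which is \( -2(n-1)! \) when \( n \) is odd and \( 0 \) when \( n \) is even, as claimed.

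To establish the one-parameter identity, note that both sides are polynomials in \( x \), so it suffices to check that their \( x \)-derivatives coincide and that they agree at a single point.  Setting \( x = 0 \), the left side vanishes (every partition has \( \#\vec{w} \geq 1 \)) and so does the right side, so the whole identity reduces to the derivative statement
\[
g'(n,x) = n!\,(1+x)^{n-1} \, .
\]
Term-by-term differentiation gives \( g'(n,x) = \sum_{\vec{w}\in\Part(n)} x^{\#\vec{w}-1} \prod_{l} \#w_l! \cdot \#\vec{w}! \); the differentiation conveniently upgrades the factor \( (\#\vec{w}-1)! \) to \( \#\vec{w}! \), which is exactly what is needed below.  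Matching the coefficient of \( x^{i-1} \) on both sides for each \( i \) then amounts to proving
\[
\sum_{\substack{\vec{w}\in\Part(n)\\ \#\vec{w}=i}} i!\,\prod_{l=1}^{i} \#w_l! \;=\; n!\,\binom{n-1}{i-1} \, .
\]

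I would prove this last equality by double counting, interpreting both sides as enumerating the same objects, which I will call \emph{ordered/ordered partitions} of \( \{1,\ldots,n\} \) into \( i \) blocks: sequences \( (B_1,\ldots,B_i) \) of nonempty, pairwise disjoint, totally ordered subsets with union \( \{1,\ldots,n\} \), where both the order of the blocks and the internal order within each block matter.  On one hand, such an object arises by choosing a permutation of \( \{1,\ldots,n\} \) (\( n! \) ways) and inserting \( i-1 \) separating bars into a subset of the \( n-1 \) internal gaps (\( \binom{n-1}{i-1} \) ways), and every ordered/ordered partition arises exactly once this way, giving the right-hand side.  On the other hand, forgetting all orderings sends an ordered/ordered partition to a genuine set-partition \( \vec{w} \in \Part(n) \) with \( i \) parts, and the fibre over \( \vec{w} \) has size \( i!\prod_l \#w_l! \) (reorder the \( i \) parts, then reorder the elements within each part); summing over \( \vec{w} \) gives the left-hand side.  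Hence the derivative identity holds and the claim follows.  The only genuine subtlety is the bookkeeping: ensuring that the factor \( (\#\vec{w}-1)! \) appearing in \( g \) becomes precisely the ``order-the-\( i \)-parts'' factor \( i! \) required for the bijection — this is exactly what passing to the derivative accomplishes, which is why I would prove the derivative identity rather than attempt the summed identity directly.
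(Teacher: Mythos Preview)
Your proposal is correct and follows essentially the same route as the paper's own proof: introduce the one-parameter polynomial \( g(n,x) \), reduce to the derivative identity \( g'(n,x)=n!(1+x)^{n-1} \) by checking the value at \( x=0 \), and then verify the derivative identity coefficient-by-coefficient via a double count of ordered/ordered partitions. Even the terminology and the two countings (permutation plus bars versus set-partition plus reorderings) match the paper exactly.
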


		Finally, we can uses these results to prove the theorem.
		
		\begin{proof}[Proof of theorem]
			Using \autoref{lem:zhao}, we have that
			\[
				\sum_{\sigma} \zeta^\star(\bl^{-1}(2 \circ \ell_{\sigma(1)}, \ldots, \ell_{\sigma(n)})) = \epsilon(\circ) \sum_{\sigma \in S_n} \sum_{\vec{p} \in \Pi(\sigma \cdot (\widetilde{\ell_1}, \ldots, \widetilde{\ell_n}))} 2^{\# \vec{p}} \zeta(\vec{p}) \, .
			\]
			Notice \( \epsilon(\circ) \) matches with \( \epsilon(s) \), for the following reason.  If \( \circ = \text{``\,,\,''} \), then \( \zeta^\star(\bl^{-1}(2, \ell_1, \ldots)) = I(01 \mid 1 \cdots) = \zeta(1,\ldots) \) and \( \epsilon(\vec{s}) = 1 \) since \( s_1 = 1 \).  Otherwise \( \circ = ``\,+\,'' \), then \( \zeta_\star(\bl^{-1}(2+\ell_1,\ldots)) = I(0101\cdots) = \zeta(2,\ldots) \), and \( \epsilon(\vec{s}) = -1 \) since \( s_1 = 2 \).
			
			Interchange the summations, write the result as a sum over odd-sized partitions using the \autoref{prop:oddpartsum}
			\[
				 = 	\epsilon(\circ) \sum_{\vec{r} \in \Part_{\text{odd}}(n)} 2^{\# \vec{r}} \prod_i (\# r_i - 1)! \prod_i \zeta\Big(\bigoplus_{j \in r_i} \widetilde{\ell_j}\Big) \, .
			\]
			
			Since the size of each partition is odd, we can explicitly evaluate \( \bigoplus_{j \in p_i} \widetilde{\ell_j} \), and the resulting \( \zeta \) as follows.
			
			\paragraph{Case \( \# \{ \ell_i | \text{ even } \} \equiv 0 \pmod*{2} \):}  Then
			\[
				\bigoplus_{j \in p_i} \widetilde{\ell_j} = \sum_{j \in p_i} \ell_j \, ,
			\]
			and this is sum is odd.  This is because the number of bars is additive (i.e. the sign is multiplicative), and there are an even number of bars in total.  So the \( \oplus \) sum agrees with the sum of the undecorated \( \ell_i \).  Moreover, the total is odd since we sum an odd number of odd numbers.
			
			Overall, this means
			\[
				\zeta\bigg(\bigoplus_{j \in p_i} \widetilde{\ell_j} \bigg) = \zeta\bigg( \sum_{j \in p_i} \ell_j \bigg) \, .
			\]
			
			\paragraph{Case \( \# \{ \ell_i | \text{ even } \} \equiv 1 \pmod*{2} \):}  Then
			\[
				\bigoplus_{j \in p_i} \widetilde{\ell_j} = \overline{\sum_{j \in p_i} \ell_j} \, ,
			\]
			 and this sum is even.  This is because there are an odd number of bars in total, so one remains after doing the \( \oplus \)-sum.  Consequently the \( \oplus \)-sum agrees with the bar of the undecorated sum.  Moreover, the total is even, since we add an even number of odd numbers.
			 
			 This means
			 \[
			 	\zeta\bigg( \bigoplus_{j \in \pi_i} \widetilde{\ell_j} \bigg) = \zeta\bigg( \overline{\sum_{j \in p_i}} \ell_j \bigg) \, .
			 \]  We can now use Zlobin's evaluation \cite{zlobin2005brief} \( \zeta^\star(\{2\}^n) = -2\zeta(\overline{2n}) \),  (which is also contained in Zhao's \(2\)-\(1\) theorem) to write
			 \[
			 	\zeta\bigg( \overline{\sum_{j \in p_i}} \ell_j \bigg) = -\frac{1}{2} \zeta^\star\bigg(\{2\}^{\text{\normalsize $ \frac{1}{2} \sum\nolimits_{j \in p_i} \ell_j $}} \bigg) \, .
			 \] 
			 This is almost our definition of \( \widetilde{\zeta} \).  I claim that the number of \( -1 \) signs between \( \epsilon(\circ) \) and all \( -\zeta^\star(\{2\}^n) \)'s is even. We may discard it to obtain an equivalent formula with our original definition of \( \widetilde{\zeta} \). \medskip
			 
			 Why is the total number of \( -1 \)'s even?
			 
			 \paragraph{Case \( \circ = \text{``\,,\,''} \):} Here \( \epsilon(\mathop{,}) = 1 \), since the MZV's begin \( \zeta^\star(\bl^{-1}(2,\geq2)) = \zeta^\star(01\mid10\cdots) = \zeta^\star(1,\geq2) \).  I claim that the number of `even-sum' parts is even, hence the total number of \( -1 \)'s is even as claimed.  In this case \( n \equiv \sum \ell_i \pmod*{2} \), and we can check \( n \) odd or even separately.
			 
			 Suppose \( n \) odd, then \( \sum \ell_i \) also odd.  By counting the number \( n \) of \( \ell_i \), we see \( \vec{p} \in  \Part_{\text{odd}}(\ell_i) \) has an odd number of parts.  If an odd number of parts have even sum, we would have \( \sum \ell_i \) even, a contradiction.
			 
			 Similarly if \( n \) even, then \( \sum \ell_i \) also even.  By counting the number \( n \) of \( \ell_i \), we see \( \vec{p}\in \Part_{\text{odd}}(\ell_i) \) has an even number of parts.  If an odd number of parts have even sum, we would again have \( \sum \ell_i \) odd, a contradiction. \medskip
			 
			 \paragraph{Case \( \circ = \text{``\,+\,''} \):} Here \( \epsilon(\mathop{+}) = -1 \), since the MZV's begin \( \zeta^\star(\bl^{-1}(2+\geq2, \ldots)) = \zeta^\star(0101 \cdots) = \zeta^\star(2, \ldots) \).  I claim that the number of `even-sum' parts is odd, hence the total number of \( -1 \)'s is even as claimed.  In this case \( n \not\equiv \sum \ell_i \pmod*{2} \), so just check \( n \) odd or even separately.
			 
			 Suppose \( n \) odd, then \( \sum \ell_i \) is even.  By counting the number \( n \) of \( \ell_i \), we see that \( \vec{p}\in \Part_{\text{odd}}(\ell_i) \) has an odd number of parts.  If an even number of parts have even sum, we would obtain \( \sum \ell_i \) odd.
			 
			 Finally \( n \) even, so \( \sum \ell_i \) is odd.  By counting the number \( n \) of \( \ell_i \), we see that \( \vec{p}\in \Part_{\text{odd}}(\ell_i) \) has an even number of parts.  If an even number of parts have even sum, we would obtain \( \sum \ell_i \) even. \medskip
			 
			 In all cases the overall number of \( -1 \)'s is even and we can drop the \( -1 \) from the definition of \( \widetilde{\zeta} \), to obtain the required result.  This completes the proof of the theorem.
		\end{proof}
	
	\bibliography{cyc_mzsv}
	\bibliographystyle{plainurl}
	
\end{document}